\newtheorem{theorem}{Theorem}[section]
\newtheorem{conjecture}[theorem]{Conjecture}
\theoremstyle{definition}
\newtheorem*{ack}{Acknowledgements}
\newtheorem*{con}{Conventions}
\newtheorem{definition}[theorem]{Definition}
\numberwithin{equation}{section} \numberwithin{figure}{section}
\definecolor{orange}{rgb}{1,0.5,0}
\title[Hilbert irreducibility   for varieties with a nef tangent bundle]{Hilbert irreducibility  for varieties with  a nef tangent bundle}
\author{Ariyan Javanpeykar}
\address{Ariyan Javanpeykar \\
Institut f\"{u}r Mathematik\\
Johannes Gutenberg-Universit\"{a}t Mainz\\
Staudingerweg 9, 55099 Mainz\\
Germany.}
\email{peykar@uni-mainz.de}
\subjclass[2010]
{14G99 
(11G35,  
14G05,  
32Q45)} 
\keywords{Integral points,  special varieties,  abelian varieties, Hilbert property, Hilbert's irreducibility theorem}
\begin{document}

 \begin{abstract}   
 We prove a   fibration property for varieties with Hilbert-type properties and give applications to  rational points on   varieties with nef tangent bundle. 
\end{abstract}

\maketitle

\thispagestyle{empty}

  \section{Introduction} 
 How many points should a variety with a dense set of rational points "really" have? Conjecturally,   a smooth projective variety over a number field with a dense set of rational points should have even more points than visible at first hand. This vague philosophy is made more precise by a conjecture of  Corvaja-Zannier relating density of rational points to   Hilbert-type properties. Let us be more precise.
 
A normal integral variety $X$ over a field $k$  of characteristic zero satisfies the \emph{Hilbert property over $k$} (see \cite[\S 3]{Serre}) if, for every finite collection of finite surjective morphisms $(\pi_i:Y_i\to X)_{i=1}^n$  with each $Y_i$ geometrically integral over $k$ and $\deg \pi_i \geq 2$, the set $X(k)\setminus \cup_{i=1}^n \pi_i(Y_i(k))$ is dense in $X$. In particular, if $X$ satisfies the Hilbert property over $k$, then $X(k)$ is dense. One can not expect the Hilbert property to hold for every variety with a dense set of rational points. Indeed,  a normal projective variety over a number field $k$ with the Hilbert property is (geometrically) simply-connected by Corvaja-Zannier's \cite[Theorem~1.6]{CZHP}. 

Corvaja-Zannier's result on the triviality of the geometric fundamental group of a variety over a number field with the Hilbert property can be rephrased as saying that one   can not expect a variety to have "more" rational points than any  finite collection of its non-trivial \'etale coverings. It is only reasonable to expect a variety to have more points than its \emph{ramified} covers. This leads one to the following notion of Corvaja-Zannier.  
  
  \begin{definition}[Corvaja-Zannier] Let $k$ be a field of characteristic zero. A   normal projective geometrically connected variety  $X$ over   $k$ satisfies the \emph{weak-Hilbert property   over $k$} if, for every  finite collection    $(\pi_i:Y_i\to X)_{i=1}^n$ of finite surjective ramified morphisms with each $Y_i$ a geometrically integral   normal variety over $k$, the set
  \[
  X(k) \setminus \cup_{i=1}^n \pi_i(Y_i(k))
  \] is dense in $X$.
  \end{definition}
  
  The weak-Hilbert property is   expected to hold for all varieties over a number field with a dense set of rational points \cite[Question-Conjecture~2]{CZHP}. This conjecture predicts,   roughly speaking, that a variety with a dense set of rational points has even more points than one sees at first hand. Namely, such a variety probably has more points than any of its ramified covers!

The following extension of Corvaja-Zannier's notion of the weak-Hilbert property will be useful. 
  
  \begin{definition}[Hilbertian pairs]\label{defn} 
  Let $X$ be a normal projective variety over a field  $k$ of characteristic zero and let $\Omega\subset X(k)$ be a dense subset. The pair  $(X,\Omega)$ has the \emph{weak-Hilbert property} (or: is \emph{a Hilbertian pair}) if, for every finite field extension $L/k$ and every finite collection    $(\pi_i:Y_i\to X_L)_{i=1}^n$ of finite surjective ramified morphisms with each $Y_{i}$ a geometrically integral   normal variety over $L$, the set 
  \[\Omega \setminus \cup_{i=1}^n \pi_i(Y_i(L))\] is dense in $X$.
  \end{definition}

Besides being useful in the proof of our main result below, the notion of Hilbertian pairs   also shows the ubiquity of the weak-Hilbert property in the study of Lang's conjecture \cite{JBook, Lang2}. Namely,    Lang's conjecture predicts that a ramified cover of an abelian variety over a number field does not have a dense set of rational points. This non-density statement is equivalent to the statement that,  for every abelian variety $A$ over a number field  $k$ and every dense subset $\Omega\subset A(k)$, the pair $(A,\Omega)$ has the weak-Hilbert property.

  Our main   result is a criterion for a variety admitting a suitable fibration to have the weak-Hilbert property, and reads as follows.

   \begin{theorem}[Mixed fibration theorem]\label{thm1} Let $k$ be a number  field,   let $S$ be a normal projective variety   over $k$,   and let $f:X\to S$ be a morphism of normal projective varieties over $k$.
 Assume that there is a dense   subset $\Omega\subset S(k)$ such that 
 \begin{enumerate}
 \item  for every $s$ in $\Omega$, the projective variety $X_s$ is an integral normal variety satisfying the Hilbert property over $k$;
 \item the pair $(S,\Omega)$ satisfies the weak-Hilbert property.
\end{enumerate}
  Then     $X$ satisfies the weak-Hilbert property over $k$.
  \end{theorem}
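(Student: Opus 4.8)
The plan is to establish the weak-Hilbert property of $X$ by fibering the problem over $S$: genuinely fibral covers are handled by hypothesis (1), while covers ``coming from the base'' are handled by hypothesis (2). First I would record two reductions. Since hypothesis (1) forces $X_s$ to be a nonempty integral variety for every $s$ in the dense set $\Omega$, and $f(X)$ is closed, the morphism $f$ is surjective; in particular $X$ is integral and $f$ is dominant. Fix a finite collection $(\pi_i\colon Y_i\to X)_{i=1}^n$ of finite surjective ramified covers with each $Y_i$ geometrically integral and normal, and set $D=X(k)\setminus\bigcup_i\pi_i(Y_i(k))$. For $s\in S(k)$ and $x\in X_s(k)$ one has $x\in\pi_i(Y_i(k))$ if and only if $x$ lies in the image of $(Y_i)_s(k)$ under the fibral cover $\pi_{i,s}\colon (Y_i)_s\to X_s$, so $D\cap X_s=X_s(k)\setminus\bigcup_i\pi_{i,s}((Y_i)_s(k))$. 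A spreading argument (using that $\Omega$ is dense and $f$ dominant) then reduces the theorem to the \emph{fibral statement}: for $s$ in a dense subset of $\Omega$, the set $X_s(k)\setminus\bigcup_i\pi_{i,s}((Y_i)_s(k))$ is dense in $X_s$. Indeed, given a nonempty open $V\subseteq X$ its image contains a dense open of $S$, which meets the dense set of good $s$, and for such $s$ the dense fibral complement meets the nonempty open $V\cap X_s$.

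Next I would analyse each cover through the Stein factorization $Y_i\xrightarrow{c_i}T_i\xrightarrow{h_i}S$ of $f\circ\pi_i$, where $T_i$ is normal and geometrically integral and $h_i$ is finite surjective; write $d_i=\deg h_i$ and $e_i=\deg\pi_i/d_i$, so that $e_i$ is the common degree over $X$ of the $d_i$ geometric components of the generic fibre cover (they form a single Galois orbit since $Y_{i,\eta}$ is irreducible). The key dichotomy I would establish is: \emph{either $h_i$ is ramified, or $e_i\geq 2$}; equivalently $e_i=1$ forces $h_i$ to be ramified. This is the technical heart, and I expect it to be the main obstacle. To prove it, assume $h_i$ is unramified, hence étale; then $X\times_S T_i$ is normal and finite étale over $X$, and $(\pi_i,c_i)$ gives a finite $X$-morphism $Y_i\to X\times_S T_i$. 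If $e_i=1$, the geometric generic fibre of $Y_i\to X$ is a disjoint union of sections, which forces this $X$-morphism to be étale; composing with the étale $X\times_S T_i\to X$ would make $\pi_i$ étale, contradicting that it is ramified.

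With the dichotomy in hand I would split the covers accordingly. For the covers with $h_i$ ramified, the weak-Hilbert property of $(S,\Omega)$ shows $\Omega\setminus\bigcup_{i\,:\,h_i\text{ ram.}}h_i(T_i(k))$ is dense in $S$; for any $s$ in this set $T_{i,s}(k)=\emptyset$, whence $(Y_i)_s(k)=\emptyset$ and these covers contribute nothing. For the covers with $e_i\geq 2$, a spreading-out argument furnishes a dense open $U_i\subseteq S$ over which the geometric components of $(Y_i)_{s,\overline{k}}\to X_{s,\overline{k}}$ match those of the generic fibre, so each has degree $e_i\geq 2$ and no section (degree-one) component occurs. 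Decomposing $(Y_i)_s$ into its $k$-irreducible components $Z$: each dominant, geometrically integral $Z$ is a finite surjective cover of $X_s$ of degree $\geq 2$, which the \emph{Hilbert} property of $X_s$ lets us avoid on a dense subset (this is exactly where the full Hilbert property of the fibres is needed, since such a $Z$ might be unramified over $X_s$), while the geometrically reducible and the non-dominant components have image in a proper closed subset of $X_s$ and so do not obstruct density.

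Finally I would assemble these: intersecting the dense set $\Omega\setminus\bigcup_{h_i\text{ ram.}}h_i(T_i(k))$ with the dense open $U=\bigcap_{e_i\geq 2}U_i$ gives a dense subset of $\Omega$, and for every $s$ in it, applying the Hilbert property of $X_s$ to the finite collection of geometrically integral degree-$\geq 2$ fibral covers and discarding the finitely many proper closed images from the remaining components shows that $X_s(k)\setminus\bigcup_i\pi_{i,s}((Y_i)_s(k))$ is dense in $X_s$. By the reduction of the first paragraph this yields density of $D$ in $X$, i.e.\ the weak-Hilbert property of $X$. The genuinely delicate point, tying together the two hypotheses, is the normality argument behind the implication $e_i=1\Rightarrow h_i$ ramified, together with the verification that degree-one components cannot appear in a general fibre.
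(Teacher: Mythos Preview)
Your proof is correct and follows the same architecture as the paper's: Stein-factorize each $Y_i\to X\to S$ as $Y_i\to T_i\xrightarrow{h_i} S$, separate the covers according to whether $h_i$ is ramified, dispose of the ramified ones via the weak-Hilbert property of $(S,\Omega)$, and treat the unramified ones via the Hilbert property of the fibres $X_s$. One small slip: a $k$-irreducible but geometrically reducible component $Z$ of $(Y_i)_s$ does \emph{not} have image contained in a proper closed subset of $X_s$ (it surjects onto $X_s$); the correct reason it is harmless is that $Z(k)=\emptyset$, since $k$ is not algebraically closed in $k(Z)$. Also, the components $Y_{i,t}$ are automatically surjective onto $X_s$ (base change of the surjection $Y_i\to X_{T_i}$), so the ``non-dominant'' case is vacuous.

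There is, however, a genuine and interesting difference in how the \'etale-base case is handled. The paper argues that when $h_i$ is \'etale the induced map $Y_i\to X_{T_i}$ is ramified, and then invokes Corvaja--Zannier's theorem (that a projective variety with the Hilbert property over a number field is geometrically simply connected) to conclude that the branch locus of $Y_i\to X_{T_i}$ dominates $T_i$, so that each fibral cover $Y_{i,t}\to X_s$ is itself ramified, hence of degree $>1$. Your route is more elementary: from $h_i$ \'etale and $\pi_i$ ramified you deduce directly, via normality of $X_{T_i}=X\times_S T_i$, that $e_i=\deg(Y_i/X_{T_i})\geq 2$, and this already forces the general fibral covers to have degree $\geq 2$, which is exactly what the Hilbert property of $X_s$ requires. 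This bypasses Corvaja--Zannier entirely. Since the paper explicitly stresses that this appeal to simple-connectedness is the only place the number-field hypothesis enters, your argument in fact proves the mixed fibration theorem over an arbitrary field of characteristic zero.
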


  We refer to Theorem \ref{thm1} as a "mixed" fibration theorem, as the base satisfies the weak-Hilbert property and some of its fibres satisfy the (usual) Hilbert property. (In particular, these fibres are  geometrically simply connected.) Note that Theorem \ref{thm1} is an analogue of  Bary-Soroker--Fehm--Petersen's fibration theorem in which the  base satisfies the Hilbert property (and is thus simply-connected); see \cite{BarySoroker}. Recall that their result can be used    to give an affirmative answer to an old question of Serre  \cite[\S3.1]{Serre} on products of varieties with the Hilbert property (Serre's question can also be answered using \cite[Lemma~8.12]{HarpazWittenberg}). The proof of Theorem \ref{thm1} is  inspired by arguments of Bary-Soroker--Fehm--Petersen and some arguments in the proof of \cite[Theorem~2.3]{Jell}. 
  
  We stress that the restriction to number fields is necessary in our proof, as we invoke the fact that the general fibres of $X\to S$ are geometrically simply-connected (as proven by Corvaja-Zannier).  
  
  Our "mixed" fibration theorem can be applied to certain fibrations over abelian varieties.
  
  \begin{theorem} \label{thm2} Let $k$ be a  number field.
Let $A$ be an abelian variety over $k$ and let $X\to A$ be a smooth proper geometrically connected morphism whose geometric fibres are   homogeneous spaces of  connected linear   groups.  Then there is a finite field extension $L/k$ such that  $X_L$ has the weak-Hilbert property over $L$.
  \end{theorem}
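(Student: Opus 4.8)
The plan is to deduce Theorem~\ref{thm2} from the Mixed fibration theorem (Theorem~\ref{thm1}) applied to $f\colon X\to A$ with base $S=A$. Thus, after replacing $k$ by a suitable finite extension $L$, I must produce a dense subset $\Omega\subseteq A(L)$ such that (1) $X_s$ satisfies the Hilbert property over $L$ for every $s\in\Omega$, and (2) the pair $(A_L,\Omega)$ satisfies the weak-Hilbert property. For (2) I would first enlarge $k$ so that $A(L)$ is Zariski dense, which is possible over a number field, and then invoke the weak-Hilbert property of an abelian variety with a dense set of rational points. Since this yields the weak-Hilbert property for the pair $(A_L,A(L))$, and since the intersection of a dense subset with a dense open subset is again dense, the pair $(A_L,\Omega)$ will inherit the weak-Hilbert property as soon as $\Omega$ is of the form $U(L)$ for a dense open $U\subseteq A_L$.

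For condition (1), the first observation is that the geometric fibres, being proper and homogeneous under a connected linear group, are generalized flag varieties $G/P$ by Borel's fixed point theorem; in particular they are geometrically simply connected and geometrically rational. I would then show that any fibre $X_s$ with $X_s(L)\neq\emptyset$ is in fact $L$-rational: an $L$-point identifies $X_s$ with $\mathcal G_s/P$ for the connected reductive $L$-group $\mathcal G_s=\Aut^0(X_s)$ and an $L$-rational parabolic $P$, and the big cell attached to an opposite parabolic exhibits a dense open $\mathbb{A}^N_L\subseteq X_s$. Finally, an $L$-rational smooth projective variety satisfies the Hilbert property over the number field $L$: this reduces, via Hilbert's irreducibility theorem for $\PP^n_L$ together with the stability of the Hilbert property under passing to and from dense open subsets of normal varieties, to the case of projective space. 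Hence condition (1) holds on the locus where the fibres acquire an $L$-point, and everything comes down to exhibiting such a locus as a dense $U(L)$.

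The main obstacle is therefore to arrange that $X_s(L)\neq\emptyset$ for $s$ in a dense subset while keeping the base an abelian variety. Because flag varieties are infinitesimally rigid, $f$ is an \'etale-locally trivial fibre bundle with fibre $F_0=(G/P)_{\kbar}$ and structure group $\Aut(F_0)$, whose identity component $\mathcal G$ is adjoint semisimple and whose component group is the finite group of diagram automorphisms. The plan is to trivialize this class up to a rational section in two steps. First, pass to a connected finite \'etale cover of $A$ — again an abelian variety after a further extension ensuring a rational point — so as to reduce the structure group to its identity component $\mathcal G$. Second, pull back along a multiplication map $[d]\colon A\to A$, which is finite \'etale and multiplies the residual torsion invariant of the family (a Tits/Brauer-type class in $\HH^2(A,-)$ with finite coefficients) by a power of $d$, so that for suitable $d$ — together with a final finite extension of $k$ absorbing the arithmetic part — the invariant dies and the pulled-back family $X'\to A'$ acquires an $L$-rational section over a dense open $U\subseteq A'$. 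I expect this to be the most delicate step, since it requires controlling the non-connected automorphism group scheme and the precise way $[d]^\ast$ acts on the torsion cohomology of the abelian variety.

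With such a section in hand, every fibre $X'_s$ over $s\in U(L)$ has an $L$-point, hence is $L$-rational and satisfies the Hilbert property, so condition (1) holds over the abelian variety $A'$ with $\Omega=U(L)$; condition (2) holds as in the first paragraph; and Theorem~\ref{thm1} shows that $X'$ has the weak-Hilbert property over $L$. Writing $\rho\colon A'\to A$ for the composite finite \'etale morphism, the base change $X'=X\times_A A'\to X$ is finite \'etale, and I would conclude by transferring the weak-Hilbert property down this cover: a ramified cover of $X$ pulls back to a ramified cover of $X'$, and the image in $X(L)$ of a dense set of points of $X'(L)$ avoiding the pullbacks is a dense set of points of $X(L)$ avoiding the original covers. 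This yields the weak-Hilbert property for $X_L$, as desired.
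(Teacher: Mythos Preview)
Your overall strategy matches the paper's: apply Theorem~\ref{thm1} to $f\colon X\to A$, using \cite[Theorem~1.3]{CDJLZ} for condition~(2) and the $L$-rationality (hence Hilbert property) of a projective homogeneous variety with an $L$-point for condition~(1). The divergence lies entirely in how you arrange for the fibres to acquire $L$-points.

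The paper does this in one stroke by invoking \cite[Theorem~2.3]{CTI}: after enlarging $k$ so that $A(k)$ is dense, there is a finite extension $L/k$ such that every point of $\Omega:=A(k)\subset A(L)$ lifts to $X(L)$. Theorem~\ref{thm1} then applies directly to $X_L\to A_L$ with this $\Omega$; no passage to covers of $A$ or of $X$ is needed, and the rationality of each $X_s$ with $s\in\Omega$ is supplied by \cite[Proposition~1.3]{CTI}. By contrast, you attempt to manufacture a rational section after a finite \'etale base change and a multiplication-by-$d$ isogeny, apply Theorem~\ref{thm1} upstairs, and then descend the weak-Hilbert property along the resulting finite \'etale cover $X'\to X$. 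This is not wrong in outline, but it is a substantial detour, and the step you yourself flag as ``most delicate''---that $[d]^\ast$ annihilates the residual nonabelian obstruction so that the pulled-back family acquires a rational section---is essentially the substance of \cite{CTI} and is not justified by your sketch: the clean $\Br(A)$ picture for $\PP^{n-1}$-bundles does not extend transparently to general adjoint semisimple structure groups, and reducing a $G$-torsor on an abelian variety so that the associated $G/P$-bundle has a section is genuine work. Rather than reproving Colliot-Th\'el\`ene--Iyer inside your argument, cite \cite[Theorem~2.3]{CTI} and proceed directly as the paper does; this also removes the need for the final descent step via \cite[Theorem~3.7]{CDJLZ}, which the paper's proof of Theorem~\ref{thm2} does not require.
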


  Crucial to our proof of Theorem \ref{thm2} is the result of Colliot-Th\'el\`ene and Iyer that $X$ satisfies potential density of rational points \cite[Corollary~2.5]{CTI}, i.e., there is a finite field extension $L/k$ such that $X(L)$ is dense. We stress that, to prove Theorem \ref{thm2}, we  use the more precise version \cite[Theorem~2.3]{CTI} of their density result. Moreover, 
   our proof of Theorem \ref{thm2} also  crucially uses   the recently established analogue of    Hilbert's irreducibility theorem (i.e., the weak-Hilbert property) for abelian varieties  \cite[Theorem~1.3]{CDJLZ}.

Note that Theorem \ref{thm1} together with Hilbert's irreducibility theorem for rational varieties and abelian varieties (see \cite[\S 3]{Serre} and \cite{CDJLZ})   implies that, for $X$ a smooth proper geometrically connected variety over a number field $k$  such that $X_{\overline{k}}$ is birational to a semi-abelian variety, there is a number field $L/k$ such that $X_L$  has the weak-Hilbert property over $L$.
 
  \subsection{Motivation}
Potential density of rational points is conjecturally closely related to Campana's notion of special variety \cite{CampanaOr0}. In fact, Campana conjectured that a projective variety satisfies potential density if and only if it is special    (see     \cite[Conjecture~9.20]{CampanaOr0}).  Subsequently, Corvaja-Zannier conjectured that varieties with a dense set of rational points have the weak-Hilbert property \cite{CZHP}. We are interested in the following consequence of their conjectures.

   \begin{conjecture}[Campana, Corvaja-Zannier]\label{conj} If $X$   is a  special smooth projective geometrically connected variety over a number field $k$, then   there is a  number field $L/k$ such that $X_L$ satisfies    the  weak-Hilbert property over $L$. 
  \end{conjecture}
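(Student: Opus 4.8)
The plan is to reduce Conjecture \ref{conj} to the (conjectural) structure theory of special varieties and then to build up the weak-Hilbert property fibration-by-fibration, using Theorem \ref{thm1} as the engine. By Campana's work \cite{CampanaOr0}, a special smooth projective variety $X$ is expected to admit, after a finite field extension $L/k$, a tower of fibrations
\[
X = X_0 \to X_1 \to \cdots \to X_n = \Spec L
\]
whose successive (orbifold) fibres are of two basic types: rationally connected varieties, and varieties of Kodaira dimension zero, the latter being \'etale-dominated by abelian varieties together with simply-connected $K$-trivial pieces (Calabi--Yau and hyperk\"ahler varieties). First I would induct on the length $n$ of such a tower, propagating the weak-Hilbert property from each base $X_{i+1}$, together with the relevant fibres, up to $X_i$.

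The rationally connected steps are handled directly by the present results. If the general fibre of $X_i \to X_{i+1}$ is rationally connected, then after a finite extension its fibres over a dense set of points are expected to satisfy the usual Hilbert property: this is classical for rational (and unirational) varieties by Serre \cite[\S3]{Serre}, and is conjectural in general. Granting this, and granting inductively that $(X_{i+1},\Omega_{i+1})$ is a Hilbertian pair in the sense of Definition \ref{defn}, Theorem \ref{thm1} applies essentially verbatim and yields the weak-Hilbert property for $X_i$. This is precisely the regime for which the \emph{mixed} fibration theorem is designed, the base satisfying only the weak-Hilbert property while the distinguished fibres are simply connected.

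The difficulty concentrates in the $K$-trivial steps, where the general fibre satisfies the weak-Hilbert property but \emph{not} the usual one, since an abelian variety is never geometrically simply connected. Here Theorem \ref{thm1} cannot be invoked, and one needs a genuinely non-mixed fibration theorem allowing the fibres over $\Omega$ to satisfy only the weak-Hilbert property. Theorem \ref{thm2} is exactly such a statement in the special case of a smooth fibration into homogeneous spaces of linear groups over an abelian base, and my next step would be to axiomatize its proof into a general assertion of the form: if $(S,\Omega)$ is Hilbertian and the fibres over $\Omega$ have the weak-Hilbert property, then so does the total space. The hard part will be that ramified covers of $X$ can restrict to \emph{\'etale} covers of the weak-Hilbert fibres, so the fibrewise simple-connectivity input that drives the argument behind Theorem \ref{thm1} is no longer available; controlling such covers requires, as in \cite{CDJLZ}, a careful analysis of how covers of $X$ meet the fibres, combined with potential density inputs in the style of \cite[Theorem~2.3]{CTI} to produce enough rational points in the complement.

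The main obstacles to completing this program are therefore genuine and lie outside the scope of fibration techniques alone. On one side, Campana's decomposition of special varieties into towers with rationally connected and $K$-trivial fibres is itself only conjectural, as is the potential density of rational points on special varieties. On the other, the $K$-trivial building blocks beyond abelian varieties --- Calabi--Yau varieties and K3 surfaces --- are not yet known to satisfy the weak-Hilbert property, so even with the structure theory in hand the base cases of the induction would remain open. Consequently a full proof of Conjecture \ref{conj} is out of reach with present methods; what Theorems \ref{thm1} and \ref{thm2} supply is a blueprint that establishes the conjecture unconditionally whenever both the structure theory and the requisite fibration theorem are available, as in the semi-abelian case recorded after Theorem \ref{thm2}.
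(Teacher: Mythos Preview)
The statement you are addressing is a \emph{conjecture}, not a theorem, and the paper does not prove it. It is stated as Conjecture~\ref{conj} and left open; the paper only establishes special cases and supporting evidence (Theorems~\ref{thm1}, \ref{thm2}, \ref{thm:final}, \ref{thm:final2}). There is therefore no ``paper's own proof'' to compare your proposal against.

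Your proposal is not a proof either, and you say as much: you explicitly acknowledge that Campana's structural decomposition of special varieties is conjectural, that the Hilbert property for general rationally connected varieties is conjectural, that the required ``weak-on-weak'' fibration theorem does not yet exist, and that the $K$-trivial base cases beyond abelian varieties are open. What you have written is a reasonable heuristic roadmap for why Conjecture~\ref{conj} is plausible and how the tools of this paper might eventually contribute, but it does not constitute a proof and should not be presented as one. If the assignment was to prove the stated result, you should flag clearly that no proof is possible with current technology and that the paper itself treats this as an open problem motivating the main theorems.
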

   
   This conjecture holds for rational varieties 
  \cite[Theorem~3.4.1]{Serre}  (i.e., $T_X$ ample) and abelian varieties   \cite{CDJLZ}  (i.e., $T_X$ trivial); see  \cite{CTS, CZHP, Demeio2,    GvirtzChenMezzedimi, NakaharaStreeter, Streeter, ZannierDuke} for more examples.

    Our main result (Theorem \ref{thm2}) provides a new case of Conjecture \ref{conj} which interpolates between the case of ample tangent bundle and trivial tangent bundle.  Let us explain in what sense Theorem \ref{thm2} settles the above conjecture for all varieties with nef tangent bundle.
    
  Firstly, a smooth projective variety with a nef tangent bundle is special and should therefore satisfy the weak-Hilbert property over some large enough number field (Conjecture \ref{conj}). Moreover,     in \cite{DemaillyPeternellSchneider}   Demailly, Peternell, and Schneider proved that,  for $k$ an algebraically closed field of characteristic zero and  $X$  a smooth projective variety over $k$ with nef tangent bundle,   there is a finite \'etale cover $Y\to X$ with $Y$ connected such that the Albanese map $Y\to\mathrm{Alb}(Y)$ of $Y$ is smooth surjective and its geometric fibres are smooth projective Fano varieties with nef tangent bundle. 
The expectation is that varieties with nef tangent bundle are actually built out of  homogeneous spaces  over an abelian variety, and this is   made more precise by the following conjecture of Campana--Peternell \cite{CampanaPeternell} (which is concerned with the fibres of the Albanese map of $Y$).
  
  \begin{conjecture}[Campana--Peternell] Let $k$ be an algebraically closed field of characteristic zero.
 Let $X$ be a smooth projective Fano variety over $k$  with nef tangent bundle.    Then $X$ is a rational (smooth projective) homogeneous space, i.e.,  a homogeneous space of a connected linear group.
  \end{conjecture}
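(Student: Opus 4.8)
The plan is to run the Mori-theoretic and minimal-rational-curve machinery, reducing the statement to a rigidity problem for the variety of minimal rational tangents (VMRT). Since $X$ is Fano, it is rationally connected and hence covered by rational curves; fixing a minimal dominating family, one attaches to a general point $x\in X$ its VMRT $\mathcal{C}_x\subset\PP(T_xX)$, the closure of the set of tangent directions at $x$ of minimal rational curves through $x$. The first step is to exploit the nefness of $T_X$: along a minimal rational curve $C\cong\PP^1$ the restriction $T_X|_C=\bigoplus_i\OO(a_i)$ is a nef bundle on $\PP^1$, so every $a_i\geq 0$, with the summand containing $T_C$ of degree $\geq 2$. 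This pins down the splitting type, shows that the family of minimal curves is unobstructed, and forces $\mathcal{C}_x$ to be a smooth projective variety of the expected dimension.

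The second step is a reduction to Picard number one. Under the nef tangent bundle hypothesis every contraction $\phi\colon X\to Y$ of a $K_X$-negative extremal ray is a smooth morphism, and both the fibres and the base are again smooth Fano varieties with nef tangent bundle; this is part of the structure theory in the circle of ideas of Demailly--Peternell--Schneider, as refined by Sol\'a Conde--Wi\'sniewski. Building a tower of such smooth fibrations and inducting simultaneously on $\dim X$ (through the fibres) and on the Picard number $\rho(X)$ (through the base), the conjecture is reduced to the case $\rho(X)=1$, i.e.\ to a Fano manifold of Picard number one with nef tangent bundle.

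In the Picard number one case the goal is to recognize $X$ from $\mathcal{C}_x$. Concretely, one aims to show that $\mathcal{C}_x\subset\PP(T_xX)$ is projectively equivalent to the VMRT of some rational homogeneous space $G/P$ of Picard number one, and then to invoke a recognition theorem of Cartan--Fubini type: once the VMRT at a general point is identified, as a projective subvariety, with that of $G/P$, the manifold $X$ is itself isomorphic to $G/P$. Such VMRT-rigidity statements are available in the work of Mok for irreducible Hermitian symmetric spaces and, more broadly, in the work of Hwang--Mok, Hong--Hwang, and Mok--Hwang for homogeneous VMRTs associated to long simple roots.

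The main obstacle, and the reason the conjecture is still open in general, is exactly this identification step. Nefness of $T_X$ controls the positivity of $T_X$ along minimal rational curves and yields a smooth VMRT with the right numerical invariants, but it does not by itself force $\mathcal{C}_x$ to be homogeneous: one must rule out smooth candidate VMRTs with the correct invariants that are not the VMRT of any $G/P$, and one must match the (hypothesis-laden, case-by-case) recognition theorems to the VMRTs that actually occur. Thus the heart of the matter is a uniform classification-and-rigidity problem for VMRTs of Picard-number-one Fano manifolds with nef tangent bundle, and it is precisely the cases in which this analysis can be carried out --- low dimension and suitable constraints on the Picard number and index --- that are presently known.
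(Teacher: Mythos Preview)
The statement you were asked to prove is the Campana--Peternell \emph{conjecture}; the paper does not prove it and contains no proof to compare against. It is stated as an open conjecture, invoked only conditionally (in Theorem~\ref{thm:final2}) or via its known low-dimensional cases (in Theorem~\ref{thm:final}, citing Kanemitsu and Watanabe for dimension $\leq 5$).

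Your write-up is not a proof either, and to your credit you say so explicitly: you outline the standard VMRT strategy (Mok, Hwang--Mok, Hong--Hwang) and the Demailly--Peternell--Schneider / Sol\'a~Conde--Wi\'sniewski structure theory reducing to Picard number one, and you correctly identify the genuine obstruction, namely that nefness of $T_X$ does not by itself force the VMRT at a general point to be projectively equivalent to that of some $G/P$. This is an accurate survey of the state of the art, but it should not be labelled a proof proposal. If the assignment was to reproduce the paper's argument for this statement, the honest answer is that there is none: the paper \emph{assumes} Campana--Peternell (or its validity in dimension $\leq 5$) as input.
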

  
  
  We use that this conjecture holds in dimension at most five to obtain the following result.
  \begin{theorem}\label{thm:final}
Let $X$ be a smooth projective geometrically connected variety over a number field $k$ such that the tangent bundle $T_X$ is nef.  
  If $\dim X \leq 5$, then there is a finite field extension $L/k$ such that $X_L$ has the weak-Hilbert property over $L$.
  \end{theorem}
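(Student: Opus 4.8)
The plan is to reduce to the finite \'etale cover furnished by Demailly--Peternell--Schneider, to identify the fibres of its Albanese map via the Campana--Peternell conjecture, to apply Theorem \ref{thm2} to that cover, and finally to transport the weak-Hilbert property back down the \'etale cover.

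First I would apply the structure theorem of \cite{DemaillyPeternellSchneider} to $X_{\kbar}$: there is a connected finite \'etale cover $Y_0 \to X_{\kbar}$ whose Albanese map $Y_0 \to \mathrm{Alb}(Y_0)$ is smooth and surjective with geometric fibres smooth projective Fano varieties with nef tangent bundle. All of this data is of finite type, hence defined over some number field; thus, after replacing $k$ by a finite extension $L_1/k$, I may assume there is a finite \'etale surjective morphism $p \colon Y \to X_{L_1}$ with $Y$ a smooth geometrically connected projective variety over $L_1$ with $Y(L_1)\ne\emptyset$, such that $A := \mathrm{Alb}(Y)$ is an abelian variety over $L_1$ and the Albanese map $a \colon Y \to A$ is smooth, proper, and surjective with geometrically connected fibres. (Spreading out and enlarging $L_1$ finitely many times secures geometric connectedness, a base point pinning down $\mathrm{Alb}$ as an abelian variety, and smoothness of $a$.)

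Next I would identify the fibres of $a$. Each geometric fibre of $a$ is a smooth projective Fano variety with nef tangent bundle of dimension at most $\dim X \le 5$. Since the Campana--Peternell conjecture is known in dimension at most five, these fibres are rational homogeneous spaces, i.e.\ homogeneous spaces of connected linear groups \cite{CampanaPeternell}. Hence $a\colon Y \to A$ is a smooth proper geometrically connected morphism to an abelian variety whose geometric fibres are homogeneous spaces of connected linear groups, so Theorem \ref{thm2} applies and yields a finite extension $L/L_1$ such that $Y_L$ has the weak-Hilbert property over $L$.

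It remains to descend the weak-Hilbert property along the finite \'etale cover $p_L \colon Y_L \to X_L$, and this I expect to be the main obstacle. Given a finite collection $(\pi_i \colon Z_i \to X_L)_{i=1}^n$ of finite surjective ramified covers with each $Z_i$ geometrically integral and normal, I would form the fibre products $W_i := Z_i \times_{X_L} Y_L$ with their projections $q_i \colon W_i \to Y_L$. Because $p_L$ is \'etale, the branch divisor of $\pi_i$ pulls back to a nonempty branch divisor of $q_i$; moreover a ramified extension of the function field $k(X_L)$ cannot be absorbed into the everywhere unramified extension $k(Y_L)/k(X_L)$, so every irreducible component of $W_i$ maps to $Y_L$ with degree at least two and ramifies. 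After normalizing these components and passing to a further finite extension of $L$ (harmless, since the theorem only asserts the existence of some finite extension) to render them geometrically integral, I obtain finitely many ramified covers of $Y_L$. Applying the weak-Hilbert property of $Y_L$, I find a dense subset of points $y \in Y_L(L)$ avoiding $\bigcup_i q_i(W_i(L))$; since $p_L$ is finite surjective, the points $p_L(y)$ form a dense subset of $X_L$, and if some $p_L(y)$ equalled $\pi_i(z)$ for $z\in Z_i(L)$, the pair $(z,y)$ would be an $L$-point of $W_i$ lying over $y$, a contradiction. Thus $X_L(L)\setminus\bigcup_{i=1}^n \pi_i(Z_i(L))$ is dense, which is the weak-Hilbert property for $X_L$. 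The delicate points in this last step are exactly the bookkeeping of the fields of definition of the components and the lifting of $L$-points through the normalizations away from the lower-dimensional non-normal loci; these are handled as in the standard treatment of the weak-Hilbert property under finite \'etale covers.
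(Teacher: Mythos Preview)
Your overall strategy matches the paper's proof exactly: apply Demailly--Peternell--Schneider, invoke the Campana--Peternell conjecture in dimension $\le 5$ (the paper cites \cite{Kanemitsu2, Watanabe} here rather than \cite{CampanaPeternell}, which is the source of the conjecture, not of its low-dimensional proof), apply Theorem~\ref{thm2} to the cover, and descend along the finite \'etale map. The only difference is that the paper dispatches the descent step by citing \cite[Theorem~3.7]{CDJLZ} as a black box, whereas you attempt to prove it by hand.

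Your hand-proof of the descent contains a genuine logical error. You write that passing to a further finite extension of $L$ to render the components $W_{i,j}$ geometrically integral is ``harmless, since the theorem only asserts the existence of some finite extension.'' This is not harmless: the weak-Hilbert property for $X_L$ over $L$ is a statement quantified over \emph{all} finite collections of ramified covers $(Z_i\to X_L)$, so $L$ must be fixed before the $Z_i$ are chosen and cannot be enlarged in response to them. The correct fix is different: a connected component $W_{i,j}$ that is not geometrically integral over $L$ has $\Gamma(W_{i,j},\OO)\supsetneq L$ and hence $W_{i,j}(L)=\emptyset$, so it contributes nothing to $q_i(W_i(L))$ and may simply be discarded; the remaining components, being normal with an $L$-point, are geometrically connected and hence (in characteristic zero) geometrically integral. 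A second, smaller gap: you assert that each component $W_{i,j}$ ramifies over $Y_L$, but your argument (pulling back the branch divisor) only shows that $W_i$ as a whole ramifies. For the per-component statement, note that $W_{i,j}\to Z_i$ is finite \'etale surjective (it is a component of the \'etale cover $W_i\to Z_i$, and $Z_i$ is connected); if $W_{i,j}\to Y_L$ were \'etale then $W_{i,j}\to X_L$ would be \'etale, and since unramifiedness descends along the faithfully flat map $W_{i,j}\to Z_i$, this would force $Z_i\to X_L$ to be unramified, a contradiction.
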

    
    Similarly, we have the following:
    
    \begin{theorem}\label{thm:final2} Assume the Campana-Peternell conjecture holds.
Let $X$ be a smooth projective geometrically connected variety over a number field $k$ such that the tangent bundle $T_X$ is nef.   Then there is a finite field extension $L/k$ such that $X_L$ has the weak-Hilbert property over $L$.
    \end{theorem}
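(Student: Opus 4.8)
The plan is to use the structure theory of varieties with nef tangent bundle to reduce to the fibration situation of Theorem~\ref{thm2}, and then to descend the weak-Hilbert property along a finite \'etale cover; this last descent is where I expect the real work to lie.

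First I would base-change to $\kbar$ and apply the theorem of Demailly, Peternell, and Schneider \cite{DemaillyPeternellSchneider} recalled above: since $X_{\kbar}$ is smooth projective with nef tangent bundle, there is a connected finite \'etale cover $Y \to X_{\kbar}$ whose Albanese morphism $a \colon Y \to \Alb(Y)$ is smooth and surjective with geometric fibres that are smooth projective Fano varieties with nef tangent bundle. As a finite \'etale cover, the Albanese variety, and the Albanese morphism are all defined over a finite extension of $k$, after replacing $k$ by a finite extension $L_0$ I may assume that a finite \'etale cover $\pi \colon Y \to X_{L_0}$ with $Y$ geometrically connected, an abelian variety $B = \Alb(Y)$ (rigidified by a rational point after a further harmless extension), and a smooth proper geometrically connected morphism $a \colon Y \to B$ are all defined over $L_0$.

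Next, invoking the Campana--Peternell conjecture, each geometric fibre of $a$---a smooth projective Fano variety with nef tangent bundle---is a homogeneous space of a connected linear group. Hence $a \colon Y \to B$ is a smooth proper geometrically connected morphism to an abelian variety whose geometric fibres are homogeneous spaces of connected linear groups, which is exactly the hypothesis of Theorem~\ref{thm2}. Applying Theorem~\ref{thm2} to $a$ yields a finite field extension $L/L_0$ such that $Y_L$ has the weak-Hilbert property over $L$; as will be convenient below, I record this in the extension-stable form of Definition~\ref{defn}, namely that $(Y_L, Y(L))$ is a Hilbertian pair.

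It remains to descend the weak-Hilbert property from $Y_L$ to $X_L$ along the finite \'etale morphism $\pi_L \colon Y_L \to X_L$, and this is the step I expect to be the main obstacle. Given a finite collection $(\rho_i \colon Z_i \to X_L)_i$ of finite surjective ramified covers with $Z_i$ geometrically integral and normal, I would form the fibre products $W_i = Z_i \times_{X_L} Y_L$ and decompose each into irreducible components over a suitable finite extension $M/L$. Because $\pi_L$ is \'etale, each component $W_{ij} \to Z_i$ is finite \'etale surjective, so $W_{ij}$ is normal and integral, and a base change of K\"ahler differentials shows that $W_{ij} \to Y_L$ is again \emph{ramified}: an \'etale component $W_{ij} \to Y_L$ would make the composite $W_{ij} \to X_L$ \'etale, yet $W_{ij} \to Z_i$ is \'etale and $Z_i \to X_L$ is ramified, forcing $W_{ij} \to X_L$ to be ramified---a contradiction. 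Applying the Hilbertian-pair property of $(Y_L, Y(L))$ to the collection of these ramified covers over $M$ produces a dense set of $L$-rational points $y \in Y(L)$ missing all of their images; pushing such points forward by $\pi_L$ gives a dense set of $L$-rational points of $X_L$, and a point $x = \pi_L(y)$ lying in some $\rho_i(Z_i(L))$ would, together with its chosen preimage, yield an $M$-point of a component of $W_i$ mapping to $y$, again a contradiction. Thus $X(L) \setminus \bigcup_i \rho_i(Z_i(L))$ is dense, i.e., $X_L$ has the weak-Hilbert property over $L$. The only delicate point is the bookkeeping of the auxiliary extension $M$, which is precisely what the extension-stable formulation of Definition~\ref{defn} is designed to absorb.
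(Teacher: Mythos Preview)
Your proposal is correct and follows exactly the same overall strategy as the paper: apply Demailly--Peternell--Schneider together with the Campana--Peternell conjecture to obtain a finite \'etale cover $Y\to X$ (after a finite extension) whose Albanese map is a smooth fibration in rational homogeneous spaces, invoke Theorem~\ref{thm2} to get the weak-Hilbert property for $Y_L$, and then descend along the \'etale cover to $X_L$.

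The only difference lies in the last step. The paper simply cites \cite[Theorem~3.7]{CDJLZ} for the fact that the weak-Hilbert property descends along finite \'etale surjective morphisms, whereas you reprove this descent by hand via the fibre-product argument. Your argument is correct: the key points---that each irreducible component $W_{ij}$ of $Z_i\times_{X_L}Y_L$ surjects onto $Z_i$ (being a finite \'etale map to an irreducible target) and that $W_{ij}\to Y_M$ must be ramified (since \'etaleness of $W_{ij}\to X_L$ would force $Z_i\to X_L$ to be \'etale by faithfully flat descent of $\Omega_{Z_i/X_L}$)---are exactly what is needed. So you have effectively re-derived the cited black box; in a write-up you could either keep your direct argument or replace it by the reference.
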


 \begin{ack}  We  gratefully acknowledge Olivier Wittenberg for very helpful discussions on the work of Colliot-Th\'el\`ene--Iyer \cite{CTI}, and also on how to prove the mixed fibration theorem. We gratefully acknowledge the IHES.
  \end{ack}
 
 \begin{con}
 If $k$ is a field, then a variety over $k$ is a finite type separated   scheme over $k$.   
 A morphism of schemes $f:X\to Y$ is \emph{ramified} if it is not unramified. If $X\to S$ is a morphism of schemes and $s\in S$, then $X_s$ denotes the scheme-theoretic fibre  of $X$ over $s$.
 \end{con}

  \begin{proof}   [Proof of Theorem \ref{thm1}]    
To show that $X$ has the weak-Hilbert property, let $n\geq 1$ be an integer and let $(\pi_i:Y_i\to X)_{i=1}^n$ be a finite collection of finite surjective ramified morphisms with $Y_i$ a geometrically integral normal variety over $k$.  
For each $i$, we consider the composed morphism $f_i:Y_i\to X\to S$, where $f:X\to S$ is as in the statement of the theorem. For each $i$, let $Y_i\to T_i\to S$ be the Stein factorization of $f_i$, where $Y_i\to T_i$ has geometrically connected fibres,  $T_i$ is normal (and geometrically integral), and $T_i\to S$ is a finite surjective morphism.   

 To prove the theorem, we may and do assume that there is an integer $1< r < n$ such that the following holds.
\begin{enumerate}
\item For every $i$ with $1\leq i \leq r$, the morphism $\psi_i:T_i\to S$ is unramified, and
\item for every $i$ with $r< i \leq n$, the morphism $\psi_i:T_i\to S$ is ramified.
\end{enumerate} Indeed,  this can be achieved by   enlarging $n$ and adding to $(\pi_i:Y_i\to X)_{i=1}^n$ the trivial cover and the pull-back of some ramified cover of $S$ (if necessary).

Recall that a scheme is normal if all its local rings are normal integral domains. (With this definition, a normal scheme could be disconnected.)
Let $U\subset S$ be a  dense open subscheme such that, for every $u$ in $U$, the scheme $Y_u$ is   normal over $k(u)$ and the scheme $X_u$ is a geometrically integral and geometrically normal variety over $k(u)$; such a dense open subscheme exists by spreading out techniques (see \cite[Théorème~6.9.1]{EGAIVII} and \cite[Théorème 9.7.7 (iv) and Théorème~12.2.4 (iv)]{EGAIVIII}).

 Define $$\Sigma:=\Omega\cap U \setminus \bigcup_{i=r+1}^n \psi_i(T_i(k))\subset S(k).$$  Moreover, define 
\[
\Psi := \bigcup_{s\in \Sigma} X_s(k) \setminus \cup_{i=1}^{r} \pi_{i,s}(Y_{i,s}(k))
\] Note that 
\[
\Psi \subset X(k) \setminus \cup_{i=1}^n \pi_i(Y_i(k)).
\] Thus,  to conclude the proof, it suffices to show that $\Psi$ is dense in $X$. 

First, 
since $(S,\Omega)$ satisfies the weak-Hilbert property over $k$ (by assumption), the set $$\Sigma=\Omega\cap U \setminus \bigcup_{i=r+1}^n \psi_i(T_i(k))\subset S(k)$$ is dense in $S$. Thus, to prove that $\Psi$ is dense, it suffices to show that,  for every $s$ in $\Sigma$, the set 
\[X_s(k)\setminus \cup_{i=1}^r \pi_{i,s}(Y_{i,s}(k)) \] is dense in $X_s$. (Here we will use that $X_s$  satisfies the Hilbert property over $k$.)

For $1\leq i \leq r$ and $s\in \Omega$, let $t_1,\ldots, t_{N_i}$ be the points of the fibre $\psi_i^{-1}(s)$. Note that $Y_{i,s} = \sqcup_{j=1}^{N_i} Y_{i,t_j}$ and that each $Y_{i,t_j}$ is geometrically connected, and thus geometrically integral and geometrically normal, over $k$.  We claim that,  for every $1\leq j \leq N_i$, the finite morphism $Y_{i,t_j}\to X_s$ is of degree $>1$.   Since $X_s$ has the Hilbert property, this then implies that 
\[X_s(k)\setminus \cup_{i=1}^r \pi_{i,s}(Y_{i,s}(k)) \] is dense in $X_s$ (as required).
 
 To prove that $Y_{i,t_j}\to X_s$ is of degree $>1$, first note that $T_i\to S$ is finite \'etale for every $1\leq i\leq r$.  In particular, the morphism $X_{T_i}\to X$ with $X_{T_i} := X\times_S T$ is finite \'etale. Thus, the induced morphism $Y_i\to X_{T_i}$ is ramified (otherwise the morphism $Y_i\to X$ would be \'etale, contradicting our assumption that it is ramified.) Since $Y_i\to X_{T_i}$ is ramified and the branch locus of $Y_i\to X_T$ dominates $T$ (because the fibres of $Y_i\to T$ are simply connected \cite[Theorem~1.6]{CZHP}),  for every $1\leq j \leq N_i$, the morphism $Y_{i,t_j}\to X_t$ is ramified (hence of degree $>1$).   
 This concludes the proof.
 \end{proof}

 \begin{proof}[Proof of Theorem \ref{thm2}] 
 Replacing $k$ by a larger number field if necessary, we may  and do assume that $\Omega:=A(k)$ is dense;  here we use that abelian  varieties satisfy potential density  \cite[\S 3]{HassettTschinkel}. By  \cite[Theorem~2.3]{CTI}, there is a number field $L/k$ such that $\Omega:=A(k)$ (as a subset of $A(L)$) lies in the image of $X(L) \to A(L)$.  Define $S:=A_L$ and $\mathcal{X}:= X_L$ and consider $\mathcal{X}\to S$. By  \cite[Theorem~1.3]{CDJLZ}, the pair  $(S,\Omega)$ has the weak-Hilbert property. Moreover,  for every $s$ in $\Omega$, the fibre $X_s$ has an $L$-point. Since $X_s$ is a geometrically-rational (smooth projective) homogeneous space over $L$ with an $L$-point,  it is rational over $L$ (use \cite[Proposition~1.3]{CTI}). In particular, since the Hilbert property is a birational invariant, we see that    $X_s$ has the Hilbert property over $L$ (use \cite[Theorem~3.4.1]{Serre}).  By Theorem \ref{thm1}, we conclude that $X_L = \mathcal{X}$  has the weak-Hilbert property over $L$,   as $X_L\to S$ and $\Omega$ satisfy the conditions of the mixed fibration theorem (Theorem \ref{thm1}).
 \end{proof}

\begin{proof}[Proof of Theorem \ref{thm:final}]  Let $\overline{k}$ be an algebraic closure of $k$.   By \cite{DemaillyPeternellSchneider}, there is a smooth projective connected variety $Y$ over $\overline{k}$ and a finite \'etale morphism $Y\to X_{\overline{k}}$ such that the Albanese map $Y\to \mathrm{Alb}(Y)$ (with respect to some choice of basepoint $y$ in $Y(\overline{k})$) is smooth surjective
and its fibres are Fano varieties with nef tangent bundle.  Since $\dim X\leq 5$, it follows that $\dim Y\leq 5$, so that the fibres of $Y\to \mathrm{Alb}(Y)$ have dimension at most $5$. In particular,  the conjecture of Campana-Peternell holds for every fibre of $Y\to\mathrm{Alb}(Y)$ (see   \cite{Kanemitsu2,  Watanabe}), so that the fibres of $Y\to \mathrm{Alb}(Y)$ are homogeneous spaces under a connected linear group. 

We now descend all of the above to a finite extension of $k$. More precisely,  replacing $k$ by a larger number field if necessary,  it follows from the preceding paragraph that there is a smooth projective geometrically connected variety $Z$ over $k$ and a finite \'etale morphism $Z\to X$  such that the Albanese map $Z\to \mathrm{Alb}(Z)$ is smooth surjective and its fibres are    homogeneous spaces under a connected linear group. In particular, by Theorem \ref{thm2}, there is a finite field extension $L/k$ such that $Z_L$ has the weak-Hilbert property over $L$.   Since  the weak-Hilbert property behaves well with respect to finite \'etale quotients \cite[Theorem~3.7]{CDJLZ} and the morphism $Z_L\to X_L$ is finite \'etale surjective, it follows that $X_L$ has the weak-Hilbert property over $L$.
\end{proof}

  \begin{proof}[Proof of Theorem \ref{thm:final2}]
 This is similar to the proof of  Theorem \ref{thm:final}. Namely,   replacing $k$  by a finite extension if necessary,   by \cite{DemaillyPeternellSchneider} and Campana-Peternell's conjecture, there is a smooth projective geometrically connected variety $Z$ over $k$ and a finite \'etale morphism $Z\to X$  such that the Albanese map $Z\to \mathrm{Alb}(Z)$ is smooth surjective and its fibres are    homogeneous spaces under a connected linear group. In particular, by Theorem \ref{thm2}, there is a finite field extension $L/k$ such that $Z_L$ has the weak-Hilbert property over $L$.   Since  the weak-Hilbert property behaves well with respect to finite \'etale quotients \cite[Theorem~3.7]{CDJLZ} and the morphism $Z_L\to X_L$ is finite \'etale surjective, it follows that $X_L$ has the weak-Hilbert property over $L$.
  \end{proof}

 \bibliography{refsci}{}
\bibliographystyle{alpha}

\end{document}